\numberwithin{equation}{section}
\newtheorem{thm}{Theorem}[section]
\newtheorem{lem}[thm]{Lemma}
\newtheorem{defin}[thm]{Definition}
\newtheorem{rem}[thm]{Remark}
\begin{document}
\title{Initial-boundary value
and inverse problems for subdiffusion  equations in
$\mathbb{R}^N$}

\author{Ravshan Ashurov}
\author{Raxim Zunnunov}
\address{Institute of Mathematics, Uzbekistan Academy of Science}
\curraddr{Institute of Mathematics, Uzbekistan Academy of Science,
Tashkent, 81 Mirzo Ulugbek str. 100170} \email{ashurovr@gmail.com}

\small

\title[Initial-boundary value
and inverse problems] {Initial-boundary value and inverse problems
for subdiffusion  equations in $\mathbb{R}^N$}

\begin{abstract}

An initial-boundary value problem for a subdiffusion equation with
an elliptic operator $A(D)$ in $\mathbb{R}^N$ is considered.
Uniqueness and existence theorems for a solution of this problem
are proved by the Fourier method. Considering the order of the
Caputo time-fractional derivative as an unknown parameter, the
corresponding inverse problem of determining this order is
studied.  It is proved, that the Fourier transform of the solution
$\hat{u}(\xi, t)$ at a fixed time instance recovers uniquely the
unknown parameter. Further, a similar initial-boundary value
problem is investigated in the case when operator $A(D)$ is
replaced by its power $A^\sigma$. Finally, existence and
uniqueness theorems for the solution of the inverse problem of
determining both the orders of fractional derivatives with respect
to time and the degree of $ \sigma $ are proved.

\vskip 0.3cm \noindent {\it AMS 2000 Mathematics Subject
Classifications} :
Primary 35R11; Secondary 74S25.\\
{\it Key words}: subdiffusion equation, Caputo derivatives,
inverse and initial-boundary value problem, determination of order
of derivatives, Fourier method.
\end{abstract}

\maketitle

\section{Introduction and main results}

The theory of differential equations with fractional derivatives
has gained significant popularity and importance in the last few
decades, mainly due to its applications in many seemingly distant
fields of science and technology (see, for example, \cite{Mach} -
\cite{Gor}).

One of the most important time-fractional equations is the
subdiffusion equation, which models anomalous or slow diffusion
processes. This equation is a partial integro-differential
equation obtained from the classical heat equation by replacing
the first-order derivative with a time-fractional derivative of
the order $ \rho \in (0, 1) $.

When considering the subdiffusion equation as a model equation in
the analysis of anomalous diffusion processes, the order of the
fractional derivative is often unknown and difficult to measure
directly. To determine this parameter, it is necessary to
investigate the inverse problems of identifying these physical
quantities based on some indirectly observable information about
solutions (see a survey paper  Li, Liu and Yamamoto \cite{LiLiu}).

In this paper, we investigate the existence and uniqueness of
solutions to initial-boundary value problems for subdiffusion
equations with the Caputo derivative and the elliptic operator $ A
(D) $ in $ \mathbb{R}^N $ with constant coefficients. The inverse
problems of determining the order of the fractional derivative
with respect to time and with respect to the spatial variable will
also be investigated.

Let us proceed to a rigorous formulation of the main results of
this article.

\textbf{1.} Let $A(D)=\sum\limits_{|\alpha|=m} a_\alpha D^\alpha$
be a homogeneous symmetric elliptic differential expression of
even order $m=2l$, with constant coefficients, i.e. $A(\xi) >0$,
for all $\xi\neq 0$, where $\alpha=(\alpha_1, \alpha_2, ...,
\alpha_N)$ - multi-index and $D=(D_1, D_2, ..., D_N)$,
$D_j=\frac{\partial}{\partial x_j}$.

The fractional integration in the Riemann - Liouville sense of
order $\rho<0$ has  the form
$$
\partial_t^\rho h(t)=\frac{1}{\Gamma
(-\rho)}\int\limits_0^t\frac{h(\xi)}{(t-\xi)^{\rho+1}} d\xi, \quad
t>0,
$$
provided the right-hand side exists. Here $\Gamma(\rho)$ is
Euler's gamma function. Using this definition one can define the
Caputo fractional derivative of order $\rho$, $0<\rho< 1$, as
$$
D_t^\rho h(t)= \partial_t^{\rho-1} \frac{d}{dt}h(t).
$$

Let $\rho\in(0,1) $ be a given number. Consider the
initial-boundary value problem
\begin{equation}\label{eq}
D_t^\rho u(x,t) + A(D)u(x,t) = 0, \quad x\in \mathbb{R}^N, \quad
0<t\leq T,
\end{equation}
\begin{equation}\label{bo}
\lim\limits_{|x|\rightarrow\infty} D^\alpha u(x,t)=0,\quad
|\alpha|\leq l-1, \quad 0<t\leq T,
\end{equation}
\begin{equation}\label{in}
u(x,0) = \varphi(x), \quad x\in \mathbb{R}^N,
\end{equation}
where $\varphi(x)$ is a given continuous function.

We call problem (\ref{eq}) - (\ref{in}) \emph{the forward
problem}.

\begin{defin}\label{def} A function $u(x,t)$ with the properties
$$D_t^\rho u(x,t)\,\, \text{and}\,\, A(D)u(x,t)\in C(\mathbb{R}^N\times
(0, T])$$ and satisfying conditions (\ref{eq}) - (\ref{in})  is
called the classical solution (or simply, solution) of the forward
problem.
\end{defin}

Denoting the Sobolev classes by $L^{\tau}_2(\mathbb{R}^N)$ (see
the definition in the next section), we can state an existence
theorem for this problem.

\begin{thm}\label{tfp} Let $\tau > \frac{N}{2}$ and $\varphi\in L^{\tau}_2(\mathbb{R}^N)$. Then the forward problem has a solution
in the form
\begin{equation}\label{fp}
u(x,t)=\int\limits_{\mathbb{R}^N} E_{\rho}(-A(\xi) t^\rho)\,
\hat{\varphi}(\xi)e^{ix\xi} d\xi.
\end{equation}
The integral uniformly converges with respect to $x\in
\mathbb{R}^N$ and for each $t\in (0, T]$, where
$\hat{\varphi}(\xi)$ is the Fourier transform of $\varphi$.

\end{thm}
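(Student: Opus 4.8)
The plan is to apply the Fourier transform in $x$ to the initial-boundary value problem and reduce it to a family of fractional ODEs parametrized by $\xi$, solve these explicitly via the Mittag-Leffler function, and then verify that the inverse Fourier transform of the resulting symbol indeed defines a classical solution. First I would recall the standard fact that for the fractional ODE $D_t^\rho v(t) + \lambda v(t) = 0$ with $v(0) = v_0$, the unique solution is $v(t) = E_\rho(-\lambda t^\rho) v_0$; applying this formally with $\lambda = A(\xi)$ gives $\hat u(\xi, t) = E_\rho(-A(\xi) t^\rho)\hat\varphi(\xi)$, which upon inverting yields the formula (1.6). The bulk of the proof is the rigorous justification: one must show the integral in (1.6) converges, that differentiation under the integral sign is permitted so that $D_t^\rho u$ and $A(D)u$ exist and are continuous on $\mathbb{R}^N \times (0,T]$, and that the initial and boundary conditions hold.

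The key analytic input is the decay estimate $|E_\rho(-A(\xi)t^\rho)| \le C/(1 + A(\xi)t^\rho)$, valid for $A(\xi)t^\rho \ge 0$, which is the classical bound for the Mittag-Leffler function with negative real argument. Combined with ellipticity, $A(\xi) \ge c|\xi|^m$ for $|\xi|$ large, this gives for each fixed $t > 0$ a factor decaying like $|\xi|^{-m}$. Since $\varphi \in L^\tau_2(\mathbb{R}^N)$ with $\tau > N/2$, the Cauchy--Schwarz inequality bounds
$$
\int_{\mathbb{R}^N} |E_\rho(-A(\xi)t^\rho)\hat\varphi(\xi)|\, d\xi \le \Big(\int_{\mathbb{R}^N}(1+|\xi|^2)^{-\tau}\,|E_\rho|^2\, d\xi\Big)^{1/2}\, \|\varphi\|_{L^\tau_2},
$$
and the first factor is finite because $\tau > N/2$ (here one can even drop the $E_\rho$ factor, using only its boundedness, to get uniform convergence in $x$ and locally uniform in $t$; near $t = 0$ the bound $|E_\rho| \le 1$ suffices). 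This establishes the uniform convergence claim and, by continuity of the integrand in $(x,t)$ together with dominated convergence, continuity of $u$; letting $t \to 0^+$ and using $E_\rho(0) = 1$ gives $u(x,0) = \varphi(x)$. For the boundary condition, differentiating brings down a factor $\xi^\alpha$ with $|\alpha| \le l-1 < m/2$, and the extra $|\xi|^{-m}$ decay from $E_\rho$ (for $t>0$ fixed) keeps the integrand in $L^1$; then $D^\alpha u(x,t) \to 0$ as $|x| \to \infty$ follows from the Riemann--Lebesgue lemma.

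The main obstacle, and the part requiring the most care, is justifying that $D_t^\rho u$ can be computed by passing the Caputo derivative under the integral and equals $-\int E_\rho'\cdot(\text{stuff})$, i.e. that $D_t^\rho u(x,t) = -A(D)u(x,t)$ holds pointwise with both sides continuous on $\mathbb{R}^N\times(0,T]$. This needs a uniform-in-$t$ (on compact subsets of $(0,T]$) integrable majorant for the integrand of $D_t^\rho[E_\rho(-A(\xi)t^\rho)]$; the identity $D_t^\rho E_\rho(-\lambda t^\rho) = -\lambda E_\rho(-\lambda t^\rho)$ together with the same Mittag-Leffler decay estimate supplies the factor $A(\xi)\cdot|\xi|^{-m} = O(1)$, so after multiplying by $\hat\varphi$ one again gets an $L^1$ bound from $\tau > N/2$ — but one has to handle the Riemann--Liouville integral defining the Caputo derivative, checking that the order of the $t$-integration (from $0$ to $t$) and the $\xi$-integration may be interchanged via Fubini, and that the resulting function is continuous in $t$ down to, but not including, $t = 0$. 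I would isolate this as a lemma on term-by-term Caputo differentiation of Fourier integrals of the form $\int g(\xi)E_\rho(-A(\xi)t^\rho)\hat\varphi(\xi)e^{ix\xi}\,d\xi$, and once it is in place the theorem follows by assembling the pieces above.
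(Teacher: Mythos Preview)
Your proposal is correct and follows essentially the same route as the paper: reduce to the fractional ODE on the Fourier side, invoke the Mittag-Leffler bound $|E_\rho(-t)|\le C/(1+t)$, and use $\tau>N/2$ (via Cauchy--Schwarz, i.e.\ Sobolev embedding) to place $\hat\varphi$ in $L_1(\mathbb{R}^N)$ so that all the relevant integrals converge and the Riemann--Lebesgue lemma yields the boundary behavior. The only cosmetic difference is that the paper packages the Sobolev-embedding step as an operator lemma ($D^\alpha(\hat A+I)^{-\nu}:L_2\to C$ for $\nu>1+N/(2m)$) and works with truncated spectral integrals $S_\mu$ rather than your direct Fubini argument for the Caputo derivative, but the underlying estimates are identical.
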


If the solution of the forward problem $u(x,t)\in
L_2(\mathbb{R}^N)$, $t\in (0, T]$, then we may define the Fourier
transform
\[
\hat{u}(\xi, t)=(2\pi)^{-N}\int\limits_{\mathbb{R}^N}u(x, t)
e^{-ix\xi} dx.
\]

The corresponding uniqueness theorem has the form.
\begin{thm}\label{uniq} Let the following conditions be satisfied
for  all $t\in (0, T]$
\begin{enumerate}
\item
 $\varphi \in C(\mathbb{R}^N)$,
\item $\lim\limits_{|x|\rightarrow\infty} D^\alpha u(x,t)=0,\quad
l\leq|\alpha|\leq m-1$,
\item $D^\alpha u(x,t)\in L_2(\mathbb{R}^N),\quad |\alpha|\leq m,$
\item $\hat{u}(\xi, t)\in L_1(\mathbb{R}^N)$.
\end{enumerate}
Then there can be only one solution to the forward problem.

\end{thm}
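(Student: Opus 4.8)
The plan is to reduce the uniqueness question to a statement about the Fourier transform $\hat u(\xi,t)$ and then use the known uniqueness theory for the scalar fractional ODE. First I would take any solution $u(x,t)$ of the forward problem satisfying conditions (1)--(4), and apply the Fourier transform in the space variable. Condition (3), together with the decay condition (2) and the structure of the solution near infinity implied by (\ref{bo}), lets me integrate by parts repeatedly and conclude that the Fourier transform turns the elliptic operator $A(D)$ into multiplication by $A(\xi)$; that is, $\widehat{A(D)u}(\xi,t)=A(\xi)\hat u(\xi,t)$. The delicate point here is justifying that the boundary terms at $|x|\to\infty$ vanish: this is exactly what conditions (\ref{bo}) and (2) are for, covering all derivatives up to order $m-1$, while (3) guarantees the resulting functions are in $L_2$ so the transforms make sense. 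I would also note that $D_t^\rho$ commutes with the Fourier transform in $x$ (differentiation under the integral sign, justified by condition (3) applied to the time-differentiated equation, or by noting $D_t^\rho u = -A(D)u \in C(\mathbb{R}^N\times(0,T])$).

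Next, applying the Fourier transform to equation (\ref{eq}) yields, for each fixed $\xi\in\mathbb{R}^N$, the scalar fractional differential equation
\begin{equation}\label{scalarode}
D_t^\rho \hat u(\xi,t) + A(\xi)\,\hat u(\xi,t) = 0, \quad 0<t\leq T,
\end{equation}
with initial condition $\hat u(\xi,0)=\hat\varphi(\xi)$ coming from (\ref{in}) (the continuity of $\varphi$ in (1) together with $u\in L_2$ lets me identify the initial trace). By the classical theory of the Caputo fractional ODE, the unique solution of (\ref{scalarode}) with prescribed initial value is $\hat u(\xi,t)=E_\rho(-A(\xi)t^\rho)\hat\varphi(\xi)$. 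Hence $\hat u(\xi,t)$ is uniquely determined by the data $\varphi$ for every $\xi$ and every $t$.

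Finally I would invoke the injectivity of the Fourier transform: if $u_1$ and $u_2$ are two solutions with the same data $\varphi$, then $\widehat{u_1-u_2}(\xi,t)=0$ for all $\xi$ and all $t\in(0,T]$, and since by condition (4) $\hat u_i(\cdot,t)\in L_1(\mathbb{R}^N)$ the inverse Fourier transform applies pointwise, giving $u_1(x,t)=u_2(x,t)$ for all $x$ and $t\in(0,T]$. At $t=0$ both equal $\varphi$ by (\ref{in}). This completes the argument.

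The main obstacle I anticipate is the rigorous justification that $\widehat{A(D)u}(\xi,t)=A(\xi)\hat u(\xi,t)$ without any a priori smoothness or decay on $u$ beyond what (1)--(4) provide. One must be careful that the integrations by parts that move all $m$ derivatives onto the exponential are legitimate: each step produces a boundary term over a large sphere, and one needs the integrand (a product of a derivative $D^\alpha u$ with $|\alpha|\le m-1$ and the bounded exponential) to vanish in the limit, which (\ref{bo}) gives for $|\alpha|\le l-1$ and hypothesis (2) gives for $l\le|\alpha|\le m-1$; the intermediate membership in $L_2$ from (3) is what makes the surface integrals tend to zero along a suitable sequence of radii. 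A secondary technical point is differentiating the time-fractional operator under the Fourier integral, which again is handled by the regularity built into Definition \ref{def} and condition (3). Once these are in place the rest is the standard uniqueness result for the fractional ODE plus Fourier inversion.
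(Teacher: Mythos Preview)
Your proposal is correct and follows the same core strategy as the paper: pass to the Fourier side, use the decay hypotheses (\ref{bo}) and (2) together with the $L_2$ membership in (3) to justify that $\widehat{A(D)u}(\xi,t)=A(\xi)\hat u(\xi,t)$, reduce to a scalar Caputo ODE, invoke its uniqueness, and then recover $u$ from $\hat u$ via condition (4).

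There are two small differences worth noting. First, the paper immediately forms the difference $u=u_1-u_2$ and works with the \emph{homogeneous} problem $u(x,0)=0$; this sidesteps the issue that under hypothesis (1) alone ($\varphi\in C(\mathbb{R}^N)$) the transform $\hat\varphi$ need not exist, so your intermediate formula $\hat u(\xi,t)=E_\rho(-A(\xi)t^\rho)\hat\varphi(\xi)$ is not literally meaningful in general --- though your final step, which passes to the difference anyway, is fine. Second, rather than working with $\hat u(\xi,t)$ for fixed $\xi$, the paper integrates over the level surfaces $\{A(\xi)=\lambda\}$ to obtain a one-parameter family
\[
w_\lambda(t)=\int_{A(\xi)=\lambda}\hat u(\xi,t)\,d\sigma_\lambda(\xi),
\]
derives $D_t^\rho w_\lambda+\lambda w_\lambda=0$ with $w_\lambda(0)=0$, and concludes $w_\lambda\equiv 0$. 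Your pointwise-in-$\xi$ argument is more direct and avoids this detour; the paper's level-set formulation seems to be a stylistic choice tied to the spectral representation of $\hat A$, and does not buy anything essential here.
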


\begin{rem}\label{Runiq}We will prove that under the condition of Theorem \ref{tfp} for initial function $\varphi$,
all four conditions of Theorem \ref{uniq} are also satisfied.
Thus, if we add these four conditions to Definition \ref{def},
then Theorem \ref{tfp} guarantees both the existence and the
uniqueness of such a solution.

\end{rem}

In recent years, numerous works of specialists have appeared,
where they study various initial-boundary value problems for
various subdiffusion equations. Let us mention only some of these
works. Basically, the case of one spatial variable $x\in
\mathbb{R}$ and subdiffusion equation with "the elliptical part"
$u_{xx}$ were considered (see, for example, handbook Machado,
aditor \cite{Mach}, book of A.A. Kilbas et al. \cite{Kil} and
monograph of A. V. Pskhu \cite{PSK}, and references in these
works). The paper Gorenflo, Luchko and Yamamoto \cite{GorLuchYam}
is devoted to the study of subdiffusion equations in Sobelev
spaces. In the paper by Kubica and Yamamoto \cite{KubYam},
initial-boundary value problems for equations with time-dependent
coefficients are considered. In multidimensional case ($x\in
\mathbb{R}^N$) instead of the differential expression $u_{xx}$
authors considered either the Laplace operator (\cite{Kil},
\cite{Agr} - \cite{PS1}), or pseudo-differential operators with
constant coefficients  in the whole space $\mathbb{R}^N$ (Umarov
\cite{SU}). In the last paper the initial function $\varphi\in
L_p(\mathbb{R}^N)$ is such, that the Fourier transform
$\hat{\varphi}$  is compactly supported. The authors of the recent
paper \cite{AO} considered initial-boundary value problems for
subdiffusion equations with arbitrary elliptic differential
operators in bounded domains.

\textbf{2.} Determining the correct order of an equation in
applied fractional modeling plays an important role. The
corresponding inverse problem for subdiffusion equations has been
considered by a number of authors (see a survey paper Li, Liu and
Yamamoto \cite{LiLiu} and references therein, \cite{Che}
-\cite{AA}). Note that in all known works the subdiffusion
equation was considered in a bounded domain $\Omega\subset
\mathbb{R}^N$. In addition, it should be noted that in
publications \cite{Che} -\cite{Jan} the following relation was
taken as an additional condition
\begin{equation}\label{ex1}
u(x_0,t)= h(t), \,\, 0<t<T,
\end{equation}
at a monitoring point $x_0\in \overline{\Omega}$. But this
condition, as a rule (an exception is work \cite{Jan} by J. Janno,
where both the uniqueness and the  existence are proved), can
ensure only the uniqueness of the solution of the inverse problem
\cite{Che} - \cite{LiL}. Authors of paper Ashurov and Umarov
\cite{AU} considered as an additional information the value of
projection of the solution onto the first eigenfunction of the
elliptic part of subdiffusion equation. Note, results of paper
\cite{AU} are applicable only in case, when the first eigenvalue
is equal to zero. The uniqueness and existence of an unknown order
of the fractional derivative in the subdiffusion equation were
proved in the recent work of Alimov and Ashurov \cite{AA}. In this
case, the additional condition is $ || u (x, t_0) || ^ 2 = d_0 $,
and the boundary condition is not necessarily homogeneous.

Now let us consider the order of fractional derivative $\rho$ in
equation (\ref{eq}) as an unknown parameter. We formulate our
inverse problem in the following way. Let us fix a vector
$\xi_0\neq 0$, such that $\hat{\varphi}(\xi_0)\neq 0$ and put
$\lambda_0=A(\xi_0)>0$. To determine the order $\rho$ of the
fractional derivative in (\ref{eq}) we use the following extra
data:
\begin{equation}\label{exN}
U(t_0, \rho) \equiv |\hat{u}(\xi_0, t_0)|\ =\ d_0,
\end{equation}
where $t_0>0$ is a fixed time instant. Obviously, Fourier
transform $\hat{u}$ of the solution depends on parameter $\rho$.

Problem (\ref{eq}) - (\ref{in}) together with extra condition
(\ref{exN}) is called \emph{the inverse problem}.

\begin{defin}\label{Idef} A pair $\{u(x,t), \rho\}$ of the solution $u(x,t)$ to the forward problem
and the parameter  $\rho\in (0,1)$ is called a classical solution
(or simply, solution) of the inverse problem.

\end{defin}

Let us denote by $E_{\rho}(t)$ the Mittag-Leffler function of the
form
$$
E_{\rho}(t)= \sum\limits_{k=0}^\infty \frac{t^k}{\Gamma(\rho
k+1)}.
$$

To solve the inverse problem fix the number $\rho_0\in (0,1)$ and
consider the problem for $\rho\in [\rho_0, 1)$.

\begin{lem}\label{W} For $ \rho_0 $ from the interval $ 0 <\rho_0 <1 $,
there is a number $ T_0 = T_0 (\lambda_0, \rho_0) $ such that for
all $ t_0 \geq T_0 $ and for arbitrary $ \varphi \in L_2
(\mathbb{R}^N) $ the function $ U (t_0, \rho) $ decreases
monotonically with respect to $ \rho \in [\rho_0,1] $.

\end{lem}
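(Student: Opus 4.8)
The plan is to reduce the assertion to a monotonicity property of a single Mittag--Leffler function and then differentiate in $\rho$. By the representation \eqref{fp} in Theorem \ref{tfp}, the Fourier transform of the solution of the forward problem is $\hat u(\xi,t)=E_\rho(-A(\xi)t^\rho)\,\hat\varphi(\xi)$, so at the fixed point $\xi_0$ we have $\hat u(\xi_0,t_0)=E_\rho(-\lambda_0 t_0^\rho)\,\hat\varphi(\xi_0)$ with $\lambda_0=A(\xi_0)>0$. Since $E_\rho(-s)>0$ for every $s\ge 0$ and $\rho\in(0,1]$ (standard: $x\mapsto E_\rho(-x)$ is completely monotone for such $\rho$), it follows that $U(t_0,\rho)=|\hat\varphi(\xi_0)|\,E_\rho(-\lambda_0 t_0^\rho)$. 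If $\hat\varphi(\xi_0)=0$ the function is identically zero; otherwise the positive constant $|\hat\varphi(\xi_0)|$ is irrelevant, and it suffices to show that $g(\rho):=E_\rho(-\lambda_0 t_0^\rho)$ is strictly decreasing on $[\rho_0,1]$ once $t_0$ exceeds a bound depending only on $\lambda_0$ and $\rho_0$.

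I would then differentiate $g$ by the chain rule, viewing $E_\rho(y)$ as a function of the two variables $\rho$ and $y$:
\[
g'(\rho)=\big(\partial_\rho E_\rho\big)(-\lambda_0 t_0^\rho)\;-\;\lambda_0 t_0^\rho\,\ln t_0\cdot\frac{dE_\rho}{dy}\big(-\lambda_0 t_0^\rho\big).
\]
Since $s\mapsto E_\rho(-s)$ is strictly decreasing on $[0,\infty)$ for $\rho\in(0,1]$, the value of $\frac{dE_\rho}{dy}$ at $y=-\lambda_0 t_0^\rho$ is strictly positive, while $-\lambda_0 t_0^\rho\ln t_0<0$ for $t_0>1$; hence the second term is strictly negative. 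Therefore it is enough to prove that $\big(\partial_\rho E_\rho\big)(-x)\le 0$ whenever $x=\lambda_0 t_0^\rho$ is large, uniformly in $\rho\in[\rho_0,1]$; given this, one takes $T_0=(X_0/\lambda_0)^{1/\rho_0}$ with $X_0$ as below, so that $x=\lambda_0 t_0^\rho\ge\lambda_0 t_0^{\rho_0}\ge X_0$ throughout $[\rho_0,1]$.

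The analytic core is thus: there is $X_0=X_0(\rho_0)$ with $\partial_\rho E_\rho(-x)<0$ for all $x\ge X_0$ and $\rho\in[\rho_0,1]$. I would obtain this from the Hankel (Mellin--Barnes) representation $E_\rho(-x)=\frac{1}{2\pi i}\int_{\mathrm{Ha}}\frac{e^{s}s^{\rho-1}}{s^\rho+x}\,ds$: differentiating under the integral gives $\partial_\rho E_\rho(-x)=\frac{x}{2\pi i}\int_{\mathrm{Ha}}\frac{e^{s}s^{\rho-1}\ln s}{(s^\rho+x)^2}\,ds$, and expanding $(s^\rho+x)^{-2}$ in powers of $s^\rho/x$ together with the Hankel identities $\frac{1}{2\pi i}\int_{\mathrm{Ha}}e^{s}s^{-\nu}\,ds=\Gamma(\nu)^{-1}$ and $\frac{1}{2\pi i}\int_{\mathrm{Ha}}e^{s}s^{-\nu}\ln s\,ds=\psi(\nu)/\Gamma(\nu)$ yields
\[
\partial_\rho E_\rho(-x)=\frac{\psi(1-\rho)}{\Gamma(1-\rho)}\cdot\frac1x+O\!\big(x^{-2}\big),\qquad x\to\infty .
\]
For $\rho\in(0,1)$ one has $\psi(1-\rho)<0<\Gamma(1-\rho)$, and $\psi(1-\rho)/\Gamma(1-\rho)\to-1$ as $\rho\to1^-$; so the leading coefficient is negative and stays bounded away from $0$ from above on $[\rho_0,1]$, which forces $\partial_\rho E_\rho(-x)<0$ for $x$ large.

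The step I expect to be the main obstacle is making the error term in the last display uniform as $\rho\to1^-$, where the remainder estimate for the large-argument expansion of $E_\rho$ degenerates (at $\rho=1$ all algebraic terms vanish, yet $E_1(-x)=e^{-x}$). I would treat this by splitting $[\rho_0,1]=[\rho_0,1-\eta(x)]\cup[1-\eta(x),1]$ with $\eta(x)\to0$ slowly, say $\eta(x)=x^{-1/2}$: on the first piece the expansion above applies with a controlled remainder, while on the thin interval adjacent to $\rho=1$ I would use Taylor's formula in $\rho$ about $\rho=1$, where $E_\rho$ is analytic and the explicit value $\partial_\rho E_\rho(-x)\big|_{\rho=1}=x e^{-x}\big(\ln x-\mathrm{Ei}(x)\big)+1-e^{-x}$, which is $\sim -1/x$ as $x\to\infty$, is available, along with a bound on $\partial_\rho^2 E_\rho(-x)$ for $\rho$ near $1$; matching the two estimates gives the required $X_0(\rho_0)$.
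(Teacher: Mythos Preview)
Your reduction to the monotonicity of $\rho\mapsto E_\rho(-\lambda_0 t_0^\rho)$ is exactly the paper's first step (this is Lemma~\ref{elambda}), and both arguments then differentiate in $\rho$ using a Hankel--type integral representation of $E_\rho$. The real difference is in how the derivative is organised.

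You split $g'(\rho)$ by the chain rule into $(\partial_\rho E_\rho)(-x)$ and $-x\ln t_0\cdot E_\rho'(-x)$, keep only the \emph{sign} of the second piece, and then try to force $(\partial_\rho E_\rho)(-x)\le0$ for large $x$, uniformly on $[\rho_0,1]$. The paper instead writes $E_\rho(-\lambda t_0^\rho)=f_1(\rho)+f_2(\rho)$ with $f_1(\rho)=1/(\lambda t_0^\rho\Gamma(1-\rho))$ and $f_2$ the contour remainder, and differentiates each \emph{directly} in $\rho$, without separating out the $t_0^\rho$ dependence. That keeps the $\ln t_0$ contribution inside the principal term: rewriting via $\Psi(1-\rho)=\Psi(2-\rho)-1/(1-\rho)$ gives
\[
-f_1'(\rho)=\frac{(1-\rho)\bigl[\ln t_0-\Psi(2-\rho)\bigr]+1}{\lambda t_0^\rho\,\Gamma(2-\rho)}\ \ge\ \frac{1}{\lambda t_0^\rho}\qquad(t_0\ge2),
\]
a lower bound uniform on all of $[\rho_0,1)$ precisely because $\ln t_0$ dominates the bounded quantity $\Psi(2-\rho)$. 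The remainder satisfies $|f_2'(\rho)|\le (B_1/\rho+B_2\ln t_0)/(\lambda t_0^\rho)^2$ by straightforward estimates along the contour $\delta(1;\beta)$ with $\beta=\tfrac{3\pi}{4}\rho$, and comparing the two yields the explicit $T_0$ of Remark~\ref{T0}.

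The obstacle you single out---uniformity of the $O(x^{-2})$ remainder as $\rho\to1^-$---is real for your decomposition but largely self-inflicted: by discarding the \emph{size} of the argument-derivative term you throw away exactly the $\ln t_0$ contribution that, in the paper's arrangement, swallows the degeneration near $\rho=1$ for free. Your splitting $[\rho_0,1-\eta(x)]\cup[1-\eta(x),1]$ with a Taylor expansion at $\rho=1$ can be made to work, but it needs a uniform bound on $\partial_\rho^2 E_\rho(-x)$ near $\rho=1$ and a matching argument you have not carried out; it will also not produce explicit constants. A small side issue: your $T_0=(X_0/\lambda_0)^{1/\rho_0}$ may be $<1$ when $\lambda_0$ is large, whereas you need $t_0>1$ for $\ln t_0>0$; take $T_0=\max\{2,(X_0/\lambda_0)^{1/\rho_0}\}$.

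In short, your route is viable but more laborious; the paper's direct differentiation of $f_1+f_2$ is both shorter and quantitatively sharper.
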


\begin{rem}\label{T0} The number $T_0(\lambda_0, \rho_0)$ can be chosen as
\[
T_0= e^k, \quad k \geq \frac{1}{\rho_0} \max
\bigg\{\frac{B_1}{B_2}, \ln\frac{2B_2 k}{\lambda_0}\bigg\},
\]
where $B_1=43$ and $B_2=4.6$. But if $ \rho_0 \lambda_0> 0.0075 $,
then you can just put $ T_0 = 2 $.
\end{rem}

The result related to the inverse problem has the form.

\begin{thm}\label{tin} Let $\varphi\in L^{\tau}_2(\mathbb{R}^N)$,  $\tau > \frac{N}{2}$, and  $t_0\geq T_0$.
Then the inverse problem  has a unique solution $\{u(x,t), \rho
\}$ if and only if
\begin{equation}\label{thm2}
e^{-\lambda_0}< \frac{d_0}{|\hat{\varphi}(\xi_0)|}\leq
E_{\rho_0}(-\lambda_0 t_0^{\rho_0}).
\end{equation}
\end{thm}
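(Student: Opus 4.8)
The plan is to reduce everything to a one-variable monotonicity-and-intermediate-value argument for the map $\rho \mapsto U(t_0,\rho)$. First I would write down explicitly, using Theorem~\ref{tfp} and the definition of the Fourier transform, that $\hat u(\xi_0,t_0) = E_{\rho}(-\lambda_0 t_0^{\rho})\,\hat\varphi(\xi_0)$, so that
\[
U(t_0,\rho) = E_{\rho}(-\lambda_0 t_0^{\rho})\,|\hat\varphi(\xi_0)|,
\]
the sign issue being handled by the classical fact that $E_\rho(-s)\in(0,1)$ for $s>0$ and $\rho\in(0,1)$ (so the absolute value is harmless). Dividing the extra condition \eqref{exN} by $|\hat\varphi(\xi_0)|\ne 0$, the inverse problem becomes: find $\rho\in[\rho_0,1)$ with $E_\rho(-\lambda_0 t_0^\rho) = d_0/|\hat\varphi(\xi_0)|$.

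Next I would invoke Lemma~\ref{W}: since $t_0\ge T_0$, the function $F(\rho):=E_\rho(-\lambda_0 t_0^\rho)$ is strictly decreasing on $[\rho_0,1]$. Hence it is injective, which gives uniqueness of $\rho$ immediately, and once $\rho$ is fixed, uniqueness of $u$ follows from the uniqueness part of the forward problem (Theorem~\ref{uniq}, together with Remark~\ref{Runiq} guaranteeing the four conditions hold). For existence I would compute the endpoint values of $F$ on $[\rho_0,1)$: at the left endpoint $F(\rho_0)=E_{\rho_0}(-\lambda_0 t_0^{\rho_0})$, which is the right-hand bound in \eqref{thm2}; as $\rho\to 1^-$, using continuity of the Mittag-Leffler function in the parameter and $E_1(-s)=e^{-s}$, we get $F(\rho)\to e^{-\lambda_0 t_0}\le e^{-\lambda_0}$... wait, I need to be careful: $t_0\ge T_0\ge 2>1$, so actually $e^{-\lambda_0 t_0}\le e^{-\lambda_0}$, and the stated lower bound in \eqref{thm2} is $e^{-\lambda_0}$, not $e^{-\lambda_0 t_0}$. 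So the infimum of $F$ over the half-open interval $[\rho_0,1)$ is $e^{-\lambda_0 t_0}$ (not attained), and the reachable set of values is the half-open interval $\big(e^{-\lambda_0 t_0},\, E_{\rho_0}(-\lambda_0 t_0^{\rho_0})\big]$. Since $e^{-\lambda_0}\ge e^{-\lambda_0 t_0}$, condition \eqref{thm2} as stated is \emph{sufficient}: if $d_0/|\hat\varphi(\xi_0)|$ lies in $(e^{-\lambda_0}, E_{\rho_0}(-\lambda_0 t_0^{\rho_0})]$ it certainly lies in the reachable interval, so by the intermediate value theorem a (unique) $\rho$ exists.

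The delicate point — and the step I expect to be the main obstacle — is the \emph{only if} direction and pinning down why the threshold is exactly $e^{-\lambda_0}$ rather than $e^{-\lambda_0 t_0}$. Presumably the authors additionally require, as part of the problem being well-posed, that $\rho$ be bounded \emph{away} from $1$, or they use $t_0^\rho$ bounds to sharpen the limit; I would look for a statement like: for $\rho$ near $1$ and $t_0$ large, $E_\rho(-\lambda_0 t_0^\rho)$ can be bounded below by $e^{-\lambda_0}$ times a factor, or conversely that values in $(e^{-\lambda_0 t_0}, e^{-\lambda_0}]$ force $\rho$ outside $[\rho_0,1)$ given the constraint $t_0\ge T_0$. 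I would establish the necessity by contrapositive: if $d_0/|\hat\varphi(\xi_0)| \le e^{-\lambda_0}$ then, combined with monotonicity and the precise asymptotics of $E_\rho(-\lambda_0 t_0^\rho)$ as $\rho\to1$, no solution $\rho\in[\rho_0,1)$ can produce that value; and if $d_0/|\hat\varphi(\xi_0)| > E_{\rho_0}(-\lambda_0 t_0^{\rho_0})$ then $F(\rho)<d_0/|\hat\varphi(\xi_0)|$ for all $\rho\ge\rho_0$ by monotonicity, again no solution. The real work is therefore the careful Mittag-Leffler estimates controlling $F$ near $\rho=1$; everything else is the IVT plus citing the forward-problem theorems.
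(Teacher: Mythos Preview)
Your approach is exactly the paper's: compute $U(t_0,\rho)=E_\rho(-\lambda_0 t_0^\rho)\,|\hat\varphi(\xi_0)|$, invoke the strict monotonicity of $\rho\mapsto E_\rho(-\lambda_0 t_0^\rho)$ on $[\rho_0,1)$ (this is Lemma~\ref{W}, itself an immediate corollary of Lemma~\ref{elambda}), and conclude by the intermediate value theorem; the forward-problem theorems supply existence/uniqueness of $u$ once $\rho$ is fixed. The paper's entire argument is the single sentence ``Theorem~\ref{tin} is an easy consequence of these two lemmas.''

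Your hesitation about the lower bound is well placed and is not a gap in \emph{your} reasoning. Lemma~\ref{elambda} states $e_\lambda(1)<e_\lambda(\rho)\le e_\lambda(\rho_0)$, and in its proof the paper writes $e_\lambda(1)=e^{-\lambda t}$, i.e.\ $e^{-\lambda_0 t_0}$. Thus the range of $F(\rho)=E_\rho(-\lambda_0 t_0^\rho)$ over $[\rho_0,1)$ is precisely $\bigl(e^{-\lambda_0 t_0},\,E_{\rho_0}(-\lambda_0 t_0^{\rho_0})\bigr]$, and the sharp ``if and only if'' condition should carry the lower threshold $e^{-\lambda_0 t_0}$, not $e^{-\lambda_0}$. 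The paper never supplies the extra Mittag--Leffler estimate you were anticipating near $\rho=1$; it simply cites the lemmas. So the discrepancy you spotted is a misprint (or an imprecision in the ``only if'' direction) in the statement of \eqref{thm2}, not a hidden step you are missing.
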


\textbf{3.} Finally, we will consider another inverse problem of
determining both the orders of fractional derivatives with respect
to time and the spatial derivatives in the subdiffusion equations.

For the best of our knowledge, only in the following two papers
\cite{Tar} and \cite{Yama} such  inverse problems were studied and
only the uniqueness theorems ware proved  (note that uniqueness is
a very important property of a solution from an application point
of view). In paper \cite{Tar} by Tatar and Ulusoy it is considered
the initial-boundary value problem for differential equation
\[
\partial_t^\rho u(t,x) = - (-\triangle)^\sigma u(t,x), \quad t>0, \ x \in
(0,1),
\]
where $\triangle^\sigma$ is the one-dimensional fractional Laplace
operator, $\rho \in (0,1)$ and $\sigma \in (1/4,1)$. The authors
have proved that if the initial function $ \varphi (x) $ is
sufficiently smooth and all its Fourier coefficients are positive,
then the two-parameter inverse problem  with additional
information (\ref{ex1}) has a unique solution. As for physical
backgrounds for two-parameter differential equations, see, for
example, \cite{Meer}.

In \cite{Yama}, M. Yamamoto proved the uniqueness theorem for the
above two-parameter inverse problem in $N$-dimensional bounded
domain $\Omega$ with smooth boundary $\partial \Omega$. The
conditions on the initial function found in this work are less
restrictive, for example, if $\varphi$ is zero on $\partial
\Omega$, $\varphi \in L_2^\tau(\Omega)$, $\tau>N/2$, $\varphi\geq
0$ in $\Omega$ and $\varphi(x_0)\neq 0$, then the uniqueness
theorem is true.

Let us denote by $A$ an operator in $L_2(\mathbb{R}^N)$ with
domain of definition $D(A)=C_0^\infty(\mathbb{R}^N)$, acting as
$Af(x)=A(D)f(x)$. It is easy to verify that the closure $ \hat{A}
$ of operator $ A $ is positive and selfadjoint. Therefore, by
virtue of the von Neumann theorem, for any $ \sigma> 0 $, we can
introduce the degree of the operator $ \hat{A} $ as
$$
\hat{A}^\sigma f(x)=\int\limits_0^\infty \lambda^\sigma d
P_\lambda f (x)=\int\limits_{\mathbb{R}^N} A^\sigma(\xi)
\hat{f}(\xi) e^{ix\xi}d\xi,
$$
where projectors $P_\lambda$ defined as
\[
P_\lambda f (x)=\int\limits_{A(\xi)<\lambda}\hat{f}(\xi) e^{ix\xi}
d\xi.
\]
The domain of definition of this operator is determined from the
condition $\hat{A}^\sigma f(x)\in L_2(\mathbb{R}^N)$ and has the
form
$$
D(\hat{A}^\sigma)=\{f\in L_2(\mathbb{R}^N):
\int\limits_{\mathbb{R}^N} A^{2\sigma}(\xi) |\hat{f}(\xi)|^2 d\xi<
\infty\}.
$$

Suppose first that $\rho\in (\rho_0, 1)$ and $\sigma \in (0, 1)$
are given numbers and consider the initial-boundary value
(\emph{the second forward}) problem
\begin{equation}\label{eq2}
D_t^\rho v(x,t) + A^\sigma v(x,t) = 0, \quad x\in \mathbb{R}^N,
\quad 0<t\leq T,
\end{equation}
\begin{equation}\label{bo2}
\lim\limits_{|x|\rightarrow\infty} v(x,t)=0,\quad  0<t\leq T,
\end{equation}
\begin{equation}\label{in2}
v(x,0) = \varphi(x), \quad x\in \mathbb{R}^N,
\end{equation}
where $\varphi(x)$ is a given continuous function.

The solution to this problem is defined similarly to the solution
to problem (\ref{eq}) - (\ref{in}) (see Definition \ref{def}). In
exactly the same way as Theorem \ref{tfp}, it is proved that if a
$\varphi\in L^{\tau}_2(\mathbb{R}^N)$ and $\tau > \frac{N}{2}$,
then the solution of the second forward problem has the form
\begin{equation}\label{fp2}
v(x,t)=\int\limits_{\mathbb{R}^N} E_{\rho}(-A^\sigma(\xi)
t^\rho)\, \hat{\varphi}(\xi)e^{ix\xi} d\xi,
\end{equation}
where the integral uniformly converges in $x\in \mathbb{R}^N$ and
for each $t\in (0, T]$.

Now suppose, that parameters $\rho$ and $\sigma$ are unknown. To
find these numbers one obviously needs two extra conditions. It
should be noted, that the proposed method, for simultaneously
finding both the order of fractional differentiation $\rho$ and
the power $\sigma$ is applicable if there exists such
$\xi_0\in\mathbb{R}^N$, so that $A(\xi_0)=1$ and
$\hat{\varphi}(\xi_0)\neq 0$. Let $\xi_0$ be one of such a vector.
We consider the following information as additional conditions:
\begin{equation}\label{exrho}
V(\xi_0, t_0, \rho, \sigma)=|\hat{v}(\xi_0, t_0)|=d_0, \quad
t_0\geq T_0(1, \rho_0),
\end{equation}
\begin{equation}\label{exsigma}
V(\xi_1, t_1, \rho, \sigma)=|\hat{v}(\xi_1, t_1)|=d_1, \quad
A(\xi_1)=\lambda_1 (\neq 1)\geq \Lambda_1 (\rho, \sigma_0), \quad
t_1>0,
\end{equation}
where $\xi_1$ is such that $\hat{\varphi}(\xi_1)\neq 0$ and
$\Lambda_1$ is defined in (\ref{L1}).

We call problem (\ref{eq2}) - (\ref{in2}) together with extra
conditions (\ref{exrho}) and (\ref{exsigma}) \emph{the second
inverse problem}.

Note that $V(\xi_0, t_0, \rho, \sigma)$ is actually independent of
$\sigma$. Therefore, to solve the second inverse problem, we first
find the unique $\rho^\star$ that satisfies the relation
(\ref{exrho}). Then, assuming that $\rho^\star$ is already known,
using relation (\ref{exsigma}), we find the second unknown
parameter $\sigma^\star$.

\begin{thm}\label{tin2} Let $\varphi\in L^{\tau}_2(\mathbb{R}^N)$ and  $\tau > \frac{N}{2}$.
Then there exists unique $\rho^\star$, satisfying (\ref{exrho}),
if and only if  $d_0$ satisfies inequalities (\ref{thm2}) with
$\lambda_0 =1$. For $\sigma^\star$ to exist, it is necessary and
sufficient that $d_1$ satisfies the inequalities
\begin{equation}\label{thm3}
E_{\rho^\star}(-\lambda_1
t_0^{\rho^\star})<\frac{d_1}{|\hat{\varphi}(\xi_1)|}
<E_{\rho^\star}(-\lambda_1^{\sigma_0} t_0^{\rho^\star}).
\end{equation}
\end{thm}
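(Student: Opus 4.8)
The plan is to exploit the explicit Fourier representation (\ref{fp2}) and turn each of the two halves of the statement into a one–parameter monotonicity argument. From (\ref{fp2}) the Fourier transform of the second forward solution is $\hat v(\xi,t)=E_\rho(-A^\sigma(\xi)t^\rho)\hat\varphi(\xi)$, and since $E_\rho(-s)>0$ for $\rho\in(0,1)$ and $s\ge 0$, we have the clean formula $V(\xi,t,\rho,\sigma)=E_\rho(-A^\sigma(\xi)t^\rho)\,|\hat\varphi(\xi)|$. Everything below rests on this identity together with the classical fact that, for order $\rho\in(0,1)$, the function $s\mapsto E_\rho(-s)$ is continuous and \emph{strictly decreasing} on $[0,\infty)$ (a consequence of its complete monotonicity).

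\textbf{The parameter $\rho^\star$.} Because $A(\xi_0)=1$, we get $A^\sigma(\xi_0)=1$ for every $\sigma$, so $V(\xi_0,t_0,\rho,\sigma)=E_\rho(-t_0^\rho)|\hat\varphi(\xi_0)|$; this is precisely the function $U(t_0,\rho)$ of Lemma \ref{W} and Theorem \ref{tin} evaluated at $\lambda_0=1$. Hence equation (\ref{exrho}) is literally equation (\ref{exN}) with $\lambda_0=1$, and the first assertion of the theorem — existence and uniqueness of $\rho^\star$ if and only if $d_0$ obeys (\ref{thm2}) with $\lambda_0=1$ — is an immediate application of Theorem \ref{tin}. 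The key structural point is that this $\rho^\star$ is independent of $\sigma$, which is exactly what legitimises the two–step scheme announced before the theorem.

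\textbf{The parameter $\sigma^\star$.} Now fix $\rho=\rho^\star$ and put $g(\sigma):=E_{\rho^\star}(-\lambda_1^{\sigma}t_1^{\rho^\star})$ on the admissible range $\sigma\in[\sigma_0,1)$, so that (\ref{exsigma}) reads $g(\sigma)=d_1/|\hat\varphi(\xi_1)|$. Since $s\mapsto E_{\rho^\star}(-s)$ is continuous and strictly decreasing, and $\sigma\mapsto\lambda_1^{\sigma}$ is continuous and strictly increasing — here the hypothesis $\lambda_1\ge\Lambda_1(\rho,\sigma_0)$, with $\Lambda_1$ as in (\ref{L1}), is used precisely to guarantee $\lambda_1>1$ (if $\lambda_1=1$ the map $g$ is constant and $\sigma$ is not recoverable; if $\lambda_1<1$ the inequalities in (\ref{thm3}) reverse) — the composition $g$ is continuous and strictly decreasing, hence injective. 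Therefore $\sigma^\star$ is unique whenever it exists, and it exists iff $d_1/|\hat\varphi(\xi_1)|$ lies in the image $g([\sigma_0,1))$. Computing the endpoints $g(\sigma_0)=E_{\rho^\star}(-\lambda_1^{\sigma_0}t_1^{\rho^\star})$ and $\lim_{\sigma\to1^-}g(\sigma)=E_{\rho^\star}(-\lambda_1 t_1^{\rho^\star})$ identifies this image with the interval appearing in (\ref{thm3}), which gives the second assertion. Finally, having produced $(\rho^\star,\sigma^\star)$, I would let $v$ be given by (\ref{fp2}) with these parameters: by the $A^\sigma$–analogue of Theorem \ref{tfp} it is a classical solution of (\ref{eq2})–(\ref{in2}), it satisfies (\ref{exrho})–(\ref{exsigma}) by construction, and uniqueness of the pair $\{v,\rho^\star,\sigma^\star\}$ follows by combining the two uniqueness statements above with uniqueness for the forward problem.

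\textbf{Main obstacle.} The genuinely subtle ingredient in the $\rho$–step — that $\rho\mapsto E_\rho(-\lambda_0 t_0^\rho)$ need not be monotone for small $t_0$ and is monotone only for $t_0\ge T_0$ — has already been disposed of in Lemma \ref{W}, so it costs nothing here. What requires care is, first, invoking the correct monotonicity (indeed complete monotonicity) of the Mittag–Leffler function $E_{\rho^\star}(-s)$ in $s$; and, second, pinning down why $\lambda_1>1$ (the role of $\Lambda_1$, with the additional technical requirements encoded in (\ref{L1})) is the right hypothesis, together with the bookkeeping that matches the image $g([\sigma_0,1))$ to the precise two–sided inequality (\ref{thm3}).
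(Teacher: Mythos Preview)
Your treatment of $\rho^\star$ coincides with the paper's: both note that $A(\xi_0)=1$ removes the $\sigma$-dependence and then invoke Theorem~\ref{tin}/Lemma~\ref{elambda}. For $\sigma^\star$ the routes diverge. You appeal to the classical complete monotonicity of $s\mapsto E_{\rho^\star}(-s)$ and compose it with the strictly increasing map $\sigma\mapsto\lambda_1^\sigma$; in your argument the hypothesis $\lambda_1\ge\Lambda_1$ matters only because it forces $\lambda_1>1$. The paper does not cite complete monotonicity at all: it reruns the Hankel-contour machinery of Section~3, writing $E_{\rho^\star}(-\lambda^\sigma t_0^{\rho^\star})=g_1(\sigma)+g_2(\sigma)$ via (\ref{Elambda}), differentiating in $\sigma$, and bounding $|g_2'(\sigma)|$ explicitly. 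That bound is precisely what produces the threshold $\lambda^{\sigma_0}\ge 5\,\Gamma(1-\rho^\star)$ and hence the specific constant $\Lambda_1$ in (\ref{L1}); so in the paper $\Lambda_1$ is a genuine requirement of the derivative estimate, not merely a proxy for $\lambda_1>1$. Your route is shorter and actually shows that monotonicity in $\sigma$ holds under the weaker assumption $\lambda_1>1$; the paper's route is self-contained (no Pollard-type result is quoted) and mirrors the treatment of $\rho$ in Lemma~\ref{elambda}. One small bookkeeping remark: the image $g([\sigma_0,1))$ is the half-open interval $\bigl(E_{\rho^\star}(-\lambda_1 t_1^{\rho^\star}),\,E_{\rho^\star}(-\lambda_1^{\sigma_0} t_1^{\rho^\star})\bigr]$, so the right-hand inequality in (\ref{thm3}) should be non-strict; the paper is imprecise on this endpoint as well.
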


It should also be noted that the theory and applications of
various inverse problems, on determining the coefficients of the
equation, the right-hand side, and also on determining the initial
or boundary functions for differential equations of integer order
are discussed in Kabanikhin \cite{Kab} (see also references
therein).

\section{Forward problems}
In the present section we prove Theorems \ref{tfp} and \ref{uniq}
and equation (\ref{fp2}).

The class of functions $L_2(\mathbb{R}^N)$ which for a given fixed
number $a> 0$ make the norm
\[
||f||^2_{L_2^a(\mathbb{R}^N)}=\big|\big|\int\limits_{\mathbb{R}^N}(1+|\xi|^2)^{\frac{a}{2}}\hat{f}(\xi)
e^{ix\xi}\big|\big|^2_{L_2(\mathbb{R}^N)}=\int\limits_{\mathbb{R}^N}(1+|\xi|^2)^a|\hat{f}(\xi)|^2
d\xi
\]
finite is termed the Sobolev class $L_2^a(\mathbb{R}^N)$. Since
for $\tau>0$ and some constants $c_1$ and $c_2$ one has the
inequality
\begin{equation}\label{A}
c_1(1+|\xi|^2)^{\tau m} \leq 1+A^{2\tau}(\xi)\leq c_2
(1+|\xi|^2)^{\tau m},
\end{equation}
then $ D(\hat{A}^\tau)= L_2^{\tau m}(\mathbb{R}^N)$.

Let $I$ be the identity operator in $L_2(\mathbb{R}^N).$ Operator
$(\hat{A}+I)^\tau$ is defined in the same way as operator
$\hat{A}^\sigma$.

\textbf{Proof of Theorem \ref{tfp}} is based on the following
lemma (see M.A. Krasnoselski et al. \cite{Kra}, p. 453), which is
a simple consequence of the Sobolev embedding theorem.

\begin{lem}\label{CL} Let $\nu > 1+\frac{N}{2m}$. Then for any $|\alpha|\leq m$
operator $D^\alpha (\hat{A}+I)^{-\nu}$ continuously maps from
$L_2(\mathbb{R}^N)$ into $C(\mathbb{R}^N)$ and moreover the
following estimate holds true
\begin{equation}\label{CL1}
||D^\alpha (\hat{A}+I)^{-\nu} f||_{C(\mathbb{R}^N)} \leq C
||f||_{L_2(\mathbb{R}^N)}.
\end{equation}

\end{lem}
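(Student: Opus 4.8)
The plan is to pass to the Fourier side, where $(\hat{A}+I)^{-\nu}$ acts as multiplication by the symbol $(1+A(\xi))^{-\nu}$ and $D^\alpha$ as multiplication by $(i\xi)^\alpha$, and then to control the whole operator by a single weighted integral whose convergence is governed precisely by the threshold $\nu > 1+\frac{N}{2m}$. First I would write, for $f\in L_2(\mathbb{R}^N)$,
$$
D^\alpha (\hat{A}+I)^{-\nu} f(x)=\int\limits_{\mathbb{R}^N} (i\xi)^\alpha (1+A(\xi))^{-\nu}\hat{f}(\xi) e^{ix\xi}\,d\xi,
$$
and bound the sup-norm by moving the absolute value inside the integral, using $|(i\xi)^\alpha|=|\xi^\alpha|\leq |\xi|^{|\alpha|}$:
$$
\big|D^\alpha (\hat{A}+I)^{-\nu} f(x)\big|\leq \int\limits_{\mathbb{R}^N} |\xi|^{|\alpha|}(1+A(\xi))^{-\nu}|\hat{f}(\xi)|\,d\xi,
$$
the right-hand side being independent of $x$. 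The decisive step is Cauchy--Schwarz, splitting off $\hat f$,
$$
\leq \Big(\int\limits_{\mathbb{R}^N} |\xi|^{2|\alpha|}(1+A(\xi))^{-2\nu}\,d\xi\Big)^{1/2}\Big(\int\limits_{\mathbb{R}^N}|\hat{f}(\xi)|^2\,d\xi\Big)^{1/2},
$$
so that by Plancherel the second factor is a constant multiple of $\|f\|_{L_2(\mathbb{R}^N)}$, and the whole matter reduces to showing that the weight integral in the first factor is finite.

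Next I would invoke ellipticity to replace $A(\xi)$ by $|\xi|^m$. Taking $\tau=\frac12$ in the comparison (\ref{A}) gives $c_1(1+|\xi|^2)^{m/2}\leq 1+A(\xi)\leq c_2(1+|\xi|^2)^{m/2}$, hence $(1+A(\xi))^{-2\nu}\leq C(1+|\xi|^2)^{-\nu m}$; combined with $|\xi|^{2|\alpha|}\leq (1+|\xi|^2)^{|\alpha|}$, the weight integral is dominated by $C\int_{\mathbb{R}^N}(1+|\xi|^2)^{|\alpha|-\nu m}\,d\xi$. In polar coordinates this converges exactly when $2(\nu m-|\alpha|)>N$. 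Since $|\alpha|\leq m$, the worst case is $|\alpha|=m$, for which convergence requires $\nu>1+\frac{N}{2m}$ --- precisely the hypothesis of the lemma. Thus the weight integral is finite simultaneously for every $|\alpha|\leq m$, which establishes the estimate (\ref{CL1}) and even furnishes an explicit value for the constant $C$.

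Finally, for the target space $C(\mathbb{R}^N)$ I would note that the computation above shows the integrand $(i\xi)^\alpha(1+A(\xi))^{-\nu}\hat{f}(\xi)$ lies in $L_1(\mathbb{R}^N)$, being the product of an $L_2$ weight (the one just shown to have finite square integral) and $\hat{f}\in L_2(\mathbb{R}^N)$. Its inverse Fourier transform is therefore continuous and bounded, so $D^\alpha(\hat{A}+I)^{-\nu}f\in C(\mathbb{R}^N)$, and continuity of the mapping follows at once from (\ref{CL1}). I do not expect a genuine obstacle: once the symbol is written down everything is elementary, and the only point requiring care is the exponent bookkeeping that pins the threshold $\nu>1+\frac{N}{2m}$ to convergence in the extremal case $|\alpha|=m$. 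Conceptually this is nothing but the Sobolev embedding $L_2^s(\mathbb{R}^N)\hookrightarrow C(\mathbb{R}^N)$ for $s>N/2$, applied with $s=\nu m-|\alpha|$, which is exactly the route indicated by the reference \cite{Kra}.
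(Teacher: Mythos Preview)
Your argument is correct. The route differs slightly from the paper's: the paper first invokes the Sobolev embedding $L_2^a(\mathbb{R}^N)\hookrightarrow C(\mathbb{R}^N)$ for $a>N/2$ as a black box, reducing matters to the multiplier bound
\[
\int_{\mathbb{R}^N}|\hat f(\xi)|^2\,|\xi|^{2|\alpha|}(1+A(\xi))^{-2\nu}(1+|\xi|^2)^a\,d\xi\leq C\int_{\mathbb{R}^N}|\hat f(\xi)|^2\,d\xi,
\]
which holds provided $\tfrac{N}{2}<a\leq \nu m-|\alpha|$. You instead bypass the embedding by a single Cauchy--Schwarz step on the Fourier side, showing directly that the symbol $(i\xi)^\alpha(1+A(\xi))^{-\nu}$ lies in $L_2(\mathbb{R}^N)$ and hence that the integrand is in $L_1$; continuity then comes from Fourier inversion rather than from the embedding. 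In effect you have reproved the particular instance of Sobolev embedding that the paper quotes, which makes your version self-contained and gives an explicit constant, while the paper's version is more modular. The exponent bookkeeping and the threshold $\nu>1+\tfrac{N}{2m}$ emerge identically in both arguments, as you correctly observe in your closing remark.
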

\begin{proof}For any
$a>N/2$ one has  the Sobolev embedding theorem:
$L^a_2(\mathbb{R}^N)\rightarrow C(\mathbb{R}^N)$, that is
$$||D^\alpha (\hat{A}+I)^{-\nu} f||_{C(\mathbb{R}^N)} \leq
C||D^\alpha (\hat{A}+I)^{-\nu} f||_{L_2^a(\mathbb{R}^N)}.
$$
Therefore, it is sufficient to prove the inequality
$$
||D^\alpha (\hat{A}+I)^{-\nu} f||_{L_2^a(\mathbb{R}^N)} \leq
C||f||_{L_2(\mathbb{R}^N)}.
$$
But this is a consequence of the
estimate
$$
\int\limits_{\mathbb{R}^N}|\hat{f}(\xi)|^2|\xi|^{2|\alpha|}(1+A(\xi))^{-2\nu}(1+|\xi|^2)^a
d\xi\leq C\int\limits_{\mathbb{R}^N}|\hat{f}(\xi)|^2 d\xi,
$$
that is valid for $\frac{N}{2}< a \leq \nu m - |\alpha|$.

\end{proof}

To prove the existence of the forward problem's solution we remind
the following estimate of the Mittag-Leffler function with a
negative argument  (see, for example, \cite{Gor},  p.29)
\begin{equation}\label{M}
|E_{\rho}(-t)|\leq \frac{C}{1+ t}, \quad t>0.
\end{equation}

In accordance with Definition \ref{def}, we will first show that
for function (\ref{fp}) one has $A(D)u(x, t)\in
C(\mathbb{R}^N\times (0, T])$ (that is one can validly apply the
operators $D^\alpha$, $|\alpha|\leq m$, to the series in
(\ref{fp}) term-by-term).

Consider the truncated integral
\begin{equation}\label{S}
S_\mu(x, t)=\int\limits_{0}^\mu E_{\rho}(-\lambda t^\rho)d
P_\lambda \varphi(x)=\int\limits_{A(\xi)<\mu} E_{\rho}(-A(\xi)
t^\rho)\, \hat{\varphi}(\xi)e^{ix\xi} d\xi.
\end{equation}
Let $\tau>\frac{N}{2}$ and $\nu=1+\frac{\tau}{m}> 1+\frac{N}{2m}$.
Then
\[
S_\mu(x, t)=(\hat{A}+I)^{-\tau/m -1}\int\limits_{0}^\mu
(\lambda+1)^{\tau/m+1} E_{\rho}(-\lambda t^\rho)d P_\lambda
\varphi(x).
\]
Therefore by virtue of Lemma \ref{CL} one has
$$
||D^\alpha S_\mu(x, t)||^2_{C(\mathbb{R}^N)}=||D^\alpha
(\hat{A}+I)^{-\tau/m-1}\int\limits_{0}^\mu (\lambda+1)^{\tau/m+1}
E_{\rho}(-\lambda t^\rho)d P_\lambda
\varphi(x)||^2_{C(\mathbb{R}^N)}\leq
$$
\[
\leq C ||\int\limits_{0}^\mu (\lambda+1)^{\tau/m+1}
E_{\rho}(-\lambda t^\rho)d P_\lambda
\varphi(x)||_{L_2(\mathbb{R}^N)}.
\]
Using the Parseval equality, we will have
\[
||D^\alpha S_\mu(x,t)||^2_{C(\mathbb{R}^N)}\leq C
\int\limits_{A(\xi)<\mu} \big|(A(\xi)+1)^{\tau/m+1}
E_{\rho}(-A(\xi) t^\rho)\hat{\varphi}(\xi)\big|^2 d\xi.
\]
Applying the inequality (\ref{M}) gives $|(A(\xi)+1)
E_{\rho}(-A(\xi) t^\rho)|\leq C t^{-\rho}$. Therefore,
$$
||D^\alpha S_\mu(x,t)||^2_{C(\mathbb{R}^N)}\leq  C
t^{-2\rho}\int\limits_{A(\xi)<\mu} \big|(A(\xi)+1)^{\tau/m}
\hat{\varphi}(\xi)\big|^2 d\xi\leq
Ct^{-2\rho}||\varphi||^2_{L^\tau_2(\mathbb{R}^N)}.
$$
This implies the uniform in $x\in\mathbb{R}^N$ convergence of the
differentiated sum (\ref{S}) in the variables $ x_j $ for each $ t
\in (0, T]$.

Further, from equation (\ref{eq}) one has $D_t^\rho S_\mu(x,  t)=
- A(D)S_\mu(x,t)$. Therefore, proceeding  the above reasoning, we
arrive at $D_t^\rho u(x, t)\in C(\mathbb{R}^N\times(0. T])$.

It is not difficult to verify that equation (\ref{eq}) and the
initial condition (\ref{in}) are satisfied  (see, for example,
\cite{Gor}, page 173 and \cite{ACT}).

Let us show that the inclusion $\varphi\in L^\tau_2(\mathbb{R}^N),
\tau>N/2$,  implies $\hat{\varphi}\in L_1(\mathbb{R}^N)$. Indeed,
\[
\int\limits_{\mathbb{R}^N}|\hat{\varphi}(\xi)|
d\xi=\int\limits_{\mathbb{R}^N}|\hat{\varphi}(\xi)|(1+|\xi|^2)^{\tau/2}(1+|\xi|^2)^{-\tau/2}
d\xi\leq  C_\tau||\varphi||^2_{L^\tau_2(\mathbb{R}^N)}.
\]
Therefore, by virtue of inequality (\ref{M}), one has $
E_{\rho}(-A(\xi) t^\rho)\hat{\varphi}(\xi)\in L_1(\mathbb{R}^N)$.
Similarly, inequalities  (\ref{A}) and (\ref{M}) imply
$|\xi^\alpha E_{\rho}(-A(\xi) t^\rho)\hat{\varphi}(\xi)|\leq
C|A(\xi)E_{\rho}(-A(\xi) t^\rho)\hat{\varphi}(\xi)| \in
L_1(\mathbb{R}^N)$ for all $|\alpha|\leq m$. Hence, $D^\alpha
u(x,t)$, as a function of $x$, is the Fourier transform of a
$L_1$- function. Obviously, this implies  both (\ref{bo}) and
condition (2) of Theorem \ref{uniq}.

Thus Theorem \ref{tfp} and condition (2) of Theorem \ref{uniq} are
proved.

Consider the other three conditions of Theorem \ref{uniq}. The
inclusion $D^\alpha u(x, t)\in L_2(\mathbb{R}^N),\,|\alpha|\leq
m,\, \text{ for all} \quad t\in (0, T]$, is a consequence of
condition $\varphi \in L_2(\mathbb{R}^N)$. Indeed, using
inequalities (\ref{A}) and (\ref{M}) we arrive at
\[
||D^\alpha u(x, t) S_\mu(x,
t)||^2_{L_2(\mathbb{R}^N)}=\int\limits_{A(\xi)<\mu}
\big|\xi^\alpha E_{\rho}(-A(\xi) t^\rho)\hat{\varphi}(\xi)\big|^2
d\xi\leq
\]
\[
\leq C\int\limits_{A(\xi)<\mu} \big|A(\xi) E_{\rho}(-A(\xi)
t^\rho)\hat{\varphi}(\xi)\big|^2 d\xi\leq C
t^{-2\rho}||\varphi||^2_{L_2(\mathbb{R}^N)}.
\]
The property of function $\varphi\,$: $\hat{\varphi} \in
L_1(\mathbb{R}^N)$, established above, implies condition (4):
\[
|\hat{u}(\xi, t)|=|(2\pi)^{-N}E_{\rho}(-A(\xi)
t^\rho)\hat{\varphi}(\xi)|\leq C|\hat{\varphi}(\xi)|\in
L_1(\mathbb{R}^N).
\]
As regards condition (1) of Theorem \ref{uniq}, it is a direct
consequence of the Sobolev embedding theorem and the condition
$\varphi\in L^{\tau}_2(\mathbb{R}^N)$, $\tau
> \frac{N}{2}$, of Theorem \ref{tfp}.

\textbf{Proof of Theorem \ref{uniq}.} Let conditions (1)--(4) of
Theorem \ref{uniq} are satisfied. Observe, as it was shown above,
Theorem \ref{tfp} guarantee the fulfilment of these conditions.

Suppose that problem (\ref{eq})--(\ref{in}) has two solutions
$u_1(x, t)$ and $u_2(x, t)$. Our aim is to prove that $u(x,
t)=u_1(x, t)-u_2(x, t)\equiv 0$. Since the problem is linear, then
we have the following homogenous problem for $u(x, t)$:
\begin{equation}\label{eq1}
D_t^\rho u(x, t) + A(D)u(x, t) = 0,\quad x\in \mathbb{R}^N, \quad
0<t\leq T;
\end{equation}
\begin{equation}\label{bo1}
\lim\limits_{|x|\rightarrow \infty} D^\alpha u(x,t)=0,\,
|\alpha|\leq l-1, \quad 0<t\leq T;
\end{equation}
\begin{equation}\label{in1}
u(x, 0) = 0, \quad x\in \mathbb{R}^N.
\end{equation}

Let $u(x, t)$ be a solution of problem (\ref{eq1})--(\ref{in1}).
Since $u(x,t)\in L_2(\mathbb{R}^N)$, $t\in (0, T]$ (see condition
(3) of Theorem \ref{uniq}), we may define the Fourier transform
$\hat{u}(\xi, t)$ and according to condition (4) one has
$\hat{u}(\xi, t)\in L_1(\mathbb{R}^N)$. Therefore, by virtue of
Fubini's theorem, the following function of $t$ exists for almost
all $\lambda$:
\begin{equation}\label{w}
w_\lambda(t)=\int\limits_{A(\xi)=\lambda}\hat{u}(\xi, t)
d\sigma_\lambda(\xi),
\end{equation}
where $d\sigma_\lambda(\xi)$ is the corresponding surface element.

Since $u(x,t)$ is a solution of equation (\ref{eq1}), then (note,
$A(D) u(x, t)\in L_2(\mathbb{R}^N)$)
\[
D_t^\rho
w_\lambda(t)=-(2\pi)^{-N}\int\limits_{A(\xi)=\lambda}\int\limits_{\mathbb{R}^N}A(D)
u(x, t) e^{-ix\xi} dx \, d\sigma_\lambda(\xi).
\]
The inner integral exists as the Fourier transform of
$L_2$-function. In this integral, we integrate by parts. We will
take into account the following: $A(D)$ is a homogeneous symmetric
and even order differential expression; conditions (2) of Theorem
\ref{uniq}; and (\ref{bo1}). Then
\[
D_t^\rho
w_\lambda(t)=-(2\pi)^{-N}\int\limits_{A(\xi)=\lambda}\int\limits_{\mathbb{R}^N}A(-i\xi)
u(x, t) e^{-ix\xi} dx \, d\sigma_\lambda(\xi)= - \lambda
w_\lambda(t).
\]
Therefore, we have the following Cauchy problem for
$w_\lambda(t)$:
$$
D_t^\rho w_\lambda(t) +\lambda w_\lambda(t)=0,\quad t>0; \quad
w_\lambda(0)=0.
$$
This problem has the unique solution; hence, the function defined
by (\ref{w}), is identically zero (see, for example, \cite{Gor},
p. 173 and \cite{ACT}): $w_\lambda(t)\equiv 0$ for almost all
$\lambda>0$. Integrating the equation (\ref{w}) with respect to
$\lambda$ over the domain $(0, +\infty)$ we obtain
\[
\int\limits_{\mathbb{R}^N} \hat{u}(\xi, t) d\xi = 0, \,\, t> 0.
\]
Therefore, $\hat{u}(\xi, t) =0$ for almost all $\xi$, or $u(x, t)
=0$ for almost all $x$ and since $u(x, t)$ continuous on $x$, then
$u(x, t) =0$ for all $x$ and $t$. Thus Theorem \ref{uniq} is
proved.

Formula (\ref{fp2}) for the solution of the second forward problem
is established in exactly the same way with formula (\ref{fp}).

\section{First inverse problem}

\begin{lem}\label{elambda}
Given $\rho_0$ from the interval $0<\rho_0< 1$, there exists a
number $T_0=T_0(\lambda_0, \rho_0)$, such that for all $t_0\geq
T_0$ and  $\lambda \geq \lambda_0$ function $e_\lambda(\rho) =
E_\rho(-\lambda t_0^\rho)$ is positive and monotonically
decreasing with respect to $\rho\in [\rho_0, 1)$ and
\[
e_\lambda(1) < e_\lambda(\rho) \leq e_\lambda(\rho_0).
\]

\end{lem}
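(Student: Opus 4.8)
The plan is to reduce everything to the sign of $\partial_\rho e_\lambda(\rho)$ on $[\rho_0,1)$ and then to read off that sign from an integral representation of the Mittag--Leffler function. Inverting the Laplace transform $\mathcal{L}\big[E_\rho(-\lambda t^\rho)\big](p)=p^{\rho-1}/(p^\rho+\lambda)$ and collapsing the Bromwich contour onto the branch cut along the negative real axis gives, for $0<\rho<1$,
\[
e_\lambda(\rho)=E_\rho(-\lambda t_0^\rho)=\frac{\lambda\sin(\pi\rho)}{\pi}\int_0^\infty\frac{r^{\rho-1}e^{-rt_0}}{r^{2\rho}+2\lambda r^\rho\cos(\pi\rho)+\lambda^2}\,dr ,
\]
so that $t_0$ enters only through $e^{-rt_0}$. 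Since the denominator has negative discriminant in $r^\rho$ it is strictly positive, and $\sin(\pi\rho)>0$; hence the integrand is positive and $e_\lambda(\rho)>0$ for $\rho\in(0,1)$, while $e_\lambda(1)=e^{-\lambda t_0}>0$. Because the Mittag--Leffler series converges locally uniformly in $\rho$, the map $\rho\mapsto e_\lambda(\rho)$ is continuous on $[\rho_0,1]$ with $e_\lambda(1)=e^{-\lambda t_0}$. Consequently, once $e_\lambda$ is shown to be strictly decreasing on $[\rho_0,1)$, for any $\rho\in[\rho_0,1)$ and any $\rho<\rho'<1$ we get $e_\lambda(\rho)>e_\lambda(\rho')\ge\lim_{\eta\uparrow1}e_\lambda(\eta)=e_\lambda(1)$, which together with $e_\lambda(\rho)\le e_\lambda(\rho_0)$ is exactly the asserted chain. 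So the entire content is the inequality $\partial_\rho e_\lambda(\rho)<0$ for $\rho\in[\rho_0,1)$ when $t_0\ge T_0$.

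Differentiating the integral representation under the integral sign --- legitimate by dominated convergence, using that the kernel and its $\rho$-derivative decay at $r=\infty$ and are dominated near $r=0$ by a constant times $r^{\rho_0-1}|\ln r|$ --- reduces this to showing $\int_0^\infty(\partial_\rho F_\rho(r))\,e^{-rt_0}\,dr<0$, where $F_\rho$ is the kernel above. For $t_0$ large the integral concentrates at $r\downarrow0$, where $F_\rho(r)=\tfrac{\sin(\pi\rho)}{\pi\lambda}r^{\rho-1}\big(1+O(r^\rho)\big)$; the leading term produces $e_\lambda(\rho)=\tfrac{t_0^{-\rho}}{\lambda\Gamma(1-\rho)}\big(1+o(1)\big)$ and
\[
\partial_\rho e_\lambda(\rho)=\frac{t_0^{-\rho}}{\lambda\Gamma(1-\rho)}\big(\psi(1-\rho)-\ln t_0+o(1)\big),
\]
where $\psi$ is the digamma function, and this is negative because $\psi(1-\rho)\le\psi(1-\rho_0)<\psi(1)=-\gamma<0$ while $\ln t_0>0$. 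The real work is to make the $o(1)$ quantitative and uniform in $\rho\in[\rho_0,1)$: writing $y=\lambda t_0^\rho\ge\lambda_0 t_0^{\rho_0}$, I would expand $F_\rho(r)e^{-rt_0}$ to one further order, bound the remainder, and arrive at a requirement of the shape $\rho_0\ln t_0\ge B_1/B_2$ together with $\ln t_0\ge\rho_0^{-1}\ln(2B_2\ln t_0/\lambda_0)$ --- precisely the choice $T_0=e^{k}$ of Remark \ref{T0}, the factor $\rho_0^{-1}$ appearing because $y\ge\lambda_0 e^{k\rho_0}$ must be large.

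The main obstacle is this uniformity as $\rho\uparrow1$. There the inverse-power expansion of $E_\rho(-y)$ degenerates --- every term $y^{-k}/\Gamma(1-\rho k)$ vanishes at $\rho=1$, where $E_1(-y)=e^{-y}$ lies beyond all orders --- so the remainder estimates valid on a compact subinterval $[\rho_0,1-\delta]$ cannot be extended to $\rho=1$ with constants that remain bounded. I would treat the boundary layer $[1-\delta,1)$ separately: there $\psi(1-\rho)$ is large and negative, which only reinforces the negativity of $\partial_\rho e_\lambda$, and alternatively one can compare $E_\rho(-y)$ and $\partial_\rho E_\rho(-y)$ directly with $e^{-y}$; patching the two regimes and tracking all constants is what forces a genuine threshold $T_0>1$ rather than the naive $T_0=1$ suggested by the leading-order heuristic, and collapses to the simple choice $T_0=2$ when $\rho_0\lambda_0$ is not too small.
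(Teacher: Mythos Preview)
Your route via the real-axis representation is legitimate but, as you yourself concede, only a programme: you isolate the correct leading term $t_0^{-\rho}/(\lambda\Gamma(1-\rho))$ and its $\rho$-derivative, and then say ``the real work is to make the $o(1)$ quantitative and uniform'' --- but that uniform remainder bound \emph{is} the lemma, and you do not supply it. Moreover your representation degenerates as $\rho\uparrow 1$: the denominator $r^{2\rho}+2\lambda r^\rho\cos(\pi\rho)+\lambda^2\to(r-\lambda)^2$ acquires a double zero, so the mass of the integrand migrates from $r=0$ to a narrow peak at $r=\lambda$ (of height $\sim(\lambda(1-\rho))^{-1}$ and width $\sim\lambda(1-\rho)$), and the Watson-type expansion at $r=0$ that produces your leading term no longer governs the integral. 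Your proposed two-regime patching would therefore need a genuinely separate quantitative argument in the layer $[1-\delta,1)$, which is not given.

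The paper sidesteps this by starting instead from the Hankel-contour formula
\[
E_\rho(-\lambda t_0^\rho)=\frac{1}{\lambda t_0^\rho\,\Gamma(1-\rho)}-\frac{1}{2\pi i\rho\lambda t_0^\rho}\int_{\delta(1;\beta)}\frac{e^{\zeta^{1/\rho}}\zeta}{\zeta+\lambda t_0^\rho}\,d\zeta=:f_1(\rho)+f_2(\rho),\qquad \beta=\tfrac{3\pi}{4}\rho,
\]
in which the leading term $f_1$ is split off exactly from the outset. One shows $-f_1'(\rho)\ge(\lambda t_0^\rho)^{-1}$ for $t_0\ge 2$ by elementary digamma estimates, then differentiates $f_2$ under the contour integral and bounds $|f_2'|$ by parametrising $\delta(1;\beta)$ (two rays and an arc) and reducing to three integrals of the type $\rho^{-1}\int_1^\infty e^{-s^{1/\rho}/2}s^{m/\rho+1}\,ds$, each bounded by an absolute constant after the substitution $r=s^{1/\rho}$. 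This yields $|f_2'(\rho)|\le(B_1/\rho+B_2\ln t_0)(\lambda t_0^\rho)^{-2}$ with the explicit $B_1=43$, $B_2=4.6$, uniformly on $[\rho_0,1)$; the threshold $T_0$ of Remark~\ref{T0} then drops out of the inequality $-f_1'>|f_2'|$. Because the contour $\delta(1;\beta)$ stays at distance comparable to $\lambda t_0^\rho$ from the pole at $-\lambda t_0^\rho$ for all $\rho\in[\rho_0,1)$, no boundary layer near $\rho=1$ ever appears.
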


\begin{proof} Let us denote by $\delta(1; \beta)$ a contour oriented by
non-decreasing $\arg \zeta$ consisting of the following parts: the
ray $\arg  \zeta = -\beta$ with $|\zeta|\geq 1$, the arc
$-\beta\leq \arg \zeta \leq \beta$, $|\zeta|=1$, and the ray $\arg
\zeta = \beta$, $|\zeta|\geq 1$. If $0<\beta <\pi$, then the
contour $\delta(1; \beta)$ divides the complex $\zeta$-plane into
two unbounded parts, namely $G^{(-)}(1;\beta)$ to the left of
$\delta(1; \beta)$ by orientation, and $G^{(+)}(1;\beta)$ to the
right of it. The contour $\delta(1; \beta)$ is called the Hankel
path.

Let $\beta = \frac{3\pi}{4}\rho$, $\rho\in [\rho_0, 1)$. Then by
the definition of this contour $\delta(1; \beta)$, we arrive at
(note, $-\lambda t_0^\rho\in G^{(-)}(1;\beta)$, see \cite{Gor}, p.
27)
\begin{equation}\label{Erho}
E_{\rho}(-\lambda t_0^\rho)= \frac{1}{\lambda t_0^\rho \Gamma
(1-\rho)}-\frac{1}{2\pi i \rho\lambda
t_0^\rho}\int\limits_{\delta(1;\beta)}\frac{e^{\zeta^{1/\rho}}\zeta}{\zeta+\lambda
t_0^\rho} d\zeta = f_1(\rho)+f_2(\rho).
\end{equation}

Let $\Psi(\rho)$ be the logarithmic derivative of the gamma
function $\Gamma(\rho)$ (for the definition and properties of
$\Psi$ see \cite{Bat}). Then $\Gamma'(\rho) = \Gamma (\rho)
\Psi(\rho)$, and therefore,
$$
f_1'(\rho)=-\frac{\ln t_0 - \Psi (1-\rho)}{\lambda t_0^\rho \Gamma
(1-\rho)}.
$$
Since
\[
\frac{1}{\Gamma(1-\rho)}=\frac{1-\rho}{\Gamma(2-\rho)}, \quad
\Psi(1-\rho)=\Psi(2-\rho)-\frac{1}{1-\rho},
\]
the function $f_1'(\rho)$  can be
represented as follows
\[
f_1'(\rho)=-\frac{1}{\lambda t_0^\rho} \frac{(1-\rho)[\ln t_0 -
\Psi (2-\rho)]+1}{ \Gamma (2-\rho)}.
\]
If $\gamma\approx 0,57722$ is the Euler-Mascheroni constant, then
$-\gamma <\Psi(2-\rho)< 1-\gamma$. By virtue of this estimate we
may write
\begin{equation}\label{f1}
-f_1'(\rho)\geq  \frac{(1-\rho)[\ln t_0 -
(1-\gamma)]+1}{\Gamma(2-\rho)\lambda t_0^\rho}\geq
\frac{1}{\lambda t_0^{\rho}},
\end{equation}
 provided $\ln t_0 > 1-\gamma$ or $t_0\geq 2$.

On the other hand one has
$$
f_2'(\rho)=\frac{1}{2\pi i \rho\lambda
t_0^\rho}\int\limits_{\delta(1;\beta)}\frac{e^{\zeta^{1/\rho}}\zeta\bigg[-\frac{1}{\rho^2}|\zeta|^{1/\rho}(\ln
|\zeta|+i\beta) -\ln t_0-\frac{\lambda t_0^\rho \ln t_0}{\zeta
+\lambda t_0^\rho}\bigg]}{\zeta+\lambda t_0^\rho} d\zeta.
$$
Note, since $\beta = \frac{3\pi}{4}\rho$ and $\rho_0\leq\rho< 1$,
then for a negative number $z<0$ the following inequality holds
\[
\min\limits_{\zeta\in\delta(1;\beta)}|\zeta-z|\geq |z|.
\]
Therefore,
$$
|f_2'(\rho)|\leq
\frac{1}{I}\int\limits_{\delta(1;\beta)}|e^{\zeta^{1/\rho}}||\zeta|\bigg[\frac{1}{\rho^2}|\zeta|^{1/\rho}(\ln
|\zeta|+\beta) +2\ln t_0\bigg] |d\zeta|=
$$
$$
\frac{1}{I\cdot\rho^2}\int\limits_{\delta(1;\beta)}|e^{\zeta^{1/\rho}}||\zeta|^{1/\rho+1}\ln
|\zeta||d\zeta| +
\frac{\beta}{I\cdot\rho^2}\int\limits_{\delta(1;\beta)}|e^{\zeta^{1/\rho}}||\zeta|^{1/\rho+1}|d\zeta|
+ \frac{2\ln
t_0}{I}\int\limits_{\delta(1;\beta)}|e^{\zeta^{1/\rho}}||\zeta||d\zeta|,
$$
where $I=2\pi \rho(\lambda t_0^\rho)^2$. Let us denote the last
three integrals by $J_j$, $j=1,2,3$, correspondingly.

\begin{lem}\label{I} Let $0<\rho\leq 1$ and $m\in \mathbb{N}$.
Then
\[
I(\rho)=\frac{1}{\rho}\int\limits_1^\infty
e^{-\frac{1}{2}s^{\frac{1}{\rho}}} s^{\frac{m}{\rho}+1} ds\leq
C_m.
\]
\end{lem}
\begin{proof} Set $r=s^{\frac{1}{\rho}}$. Then
\[
s=r^\rho, \quad ds = \rho r^{\rho-1} dr.
\]
Therefore,
\[
I(\rho)=\int\limits_1^\infty e^{-\frac{1}{2}r} r^{m-1+2\rho}
dr\leq \int\limits_1^\infty e^{-\frac{1}{2}r} r^{m+1} dr = C_m.
\]

\end{proof}

It is not hard to verify, that
\[
C_2=\frac{2^4\cdot 16}{e}\approx94.2,\quad C_1=\frac{2^3\cdot
5}{e}\approx 14.72, \quad C_0=\frac{2^2\cdot 2}{e}\approx 3.
\]

Consider the integral $J_1$. Due to the presence of $ \ln |\zeta|
$, the integrand $ J_1 $ is equal to $ 0 $ for $ |\zeta | = 1 $.
Moreover, on the rays $\arg \zeta =\pm \beta, \beta
=\frac{3\pi}{4}\rho$, one has
\[
|e^{\zeta^{\frac{1}{\rho}}}|=\exp\big(\cos
\frac{\beta}{\rho}|\zeta|^{\frac{1}{\rho}}\big)=e^{-\frac{1}{2}|\zeta|^{\frac{1}{\rho}}}.
\]
Hence (note $\ln
|\zeta|^{\frac{1}{\rho}}<|\zeta|^{\frac{1}{\rho}}$) by virtue of
Lemma \ref{I},
$$
J_1=\frac{1}{I\cdot\rho}\int\limits_{\delta(1;\beta)}|e^{\zeta^{1/\rho}}||\zeta|^{1/\rho+1}\ln
|\zeta|^{\frac{1}{\rho}}|d\zeta| =
\frac{2}{I\cdot\rho}\int\limits_{1}^\infty
e^{-\frac{1}{2}|\zeta|^{\frac{1}{\rho}}}|\zeta|^{1/\rho+1}\ln
|\zeta|^{\frac{1}{\rho}}|d\zeta|\leq
$$
$$
\leq\frac{2}{I}\int\limits_{1}^\infty
e^{-\frac{1}{2}s^{\frac{1}{\rho}}} s^{\frac{2}{\rho}+1} ds\leq
\frac{C_2}{\pi\rho(\lambda t_0^\rho)^2}.
$$

The integrands in $ J_2 $ and $ J_3 $ do not vanish on the sphere
$\{|\zeta|=1\}$ and the measure of the corresponding arc
$-\beta\leq \arg \zeta \leq \beta$, $|\zeta|=1$, is equal to
$2\beta$. Therefore, using the same technique as above,  we obtain
$$
J_2=\frac{\beta}{I\cdot\rho^2}\int\limits_{\delta(1;\beta)}|e^{\zeta^{1/\rho}}||\zeta|^{1/\rho+1}|d\zeta|=
\frac{2\beta}{I\cdot\rho^2}\bigg[\int\limits_1^\infty
e^{-\frac{1}{2}s^{\frac{1}{\rho}}}
s^{\frac{1}{\rho}+1}ds+\beta\bigg]\leq
$$
$$
\leq
\frac{\frac{3}{2}\pi\rho(C_1\rho+\frac{3}{4}\pi\rho)}{2\pi\rho^3(\lambda
t_0^\rho)^2}=\frac{3(C_1+\frac{3}{4}\pi)}{4\rho(\lambda
t_0^\rho)^2}.
$$
Similarly,
\[
J_3=\frac{2\ln
t_0}{I}\int\limits_{\delta(1;\beta)}|e^{\zeta^{1/\rho}}||\zeta||d\zeta|=\frac{4\ln
t_0}{I}\bigg[\int\limits_1^\infty
e^{-\frac{1}{2}s^{\frac{1}{\rho}}} s ds+\beta\bigg]\leq\frac{2 \ln
t_0(C_0+\frac{4}{3}\pi)}{\pi(\lambda t_0^\rho)^2}
\]

Thus we have
\[
|f'_2(\rho)|\leq\frac{B_1/\rho+B_2 \ln t_0}{(\lambda t_0^\rho)^2},
\]
where $B_1=43$ and $B_2=4.6$.

Therefore, taking into account estimate (\ref{f1}), we have
\begin{equation}\label{der}
\frac{d}{d\rho} e_\lambda(\rho)< -\frac{1}{\lambda
t_0^{\rho}}+\frac{B_1/\rho+B_2 \ln t_0}{(\lambda t_0^{\rho})^2}.
\end{equation}
In other words, this derivative is negative if
\[
t_0^{\rho}>\frac{B_1/\rho+B_2 \ln t_0}{\lambda}
\]
for all $\rho\in [\rho_0, 1)$ or, which is the same,
\begin{equation}\label{t01}
t_0^{\rho_0}>\frac{B_1/{\rho_0}+B_2 \ln t_0}{\lambda}.
\end{equation}

Next, consider two cases: $B_1/{\rho_0}> B_2 \ln t_0$ and
$B_1/{\rho_0}\leq B_2 \ln t_0$. Recall that to satisfy inequality
(\ref{f1}) we assumed that $ t_0 \geq 2 $.

\textbf{Case 1.} Let $B_1/{\rho_0}> B_2 \ln t_0$. Then
$$
\ln t_0^{\rho_0} < \frac{B_1}{B_2}, \quad \text{or} \quad
t_0^{\rho_0}< e^{\frac{B_1}{B_2}}
$$
(the latter inequality is satisfied, say if $t_0\leq
e^{B_1/B_2}$). Therefore, inequality (\ref{t01}) is satisfied if
\[
t_0^{\rho_0}>\frac{2B_1}{\rho_0\lambda}.
\]

Thus, from the last two inequalities it follows that if
\begin{equation}\label{T01}
\rho_0\cdot \lambda_0>2B_1 \cdot e^{-\frac{B_1}{B_2}} \quad
\text{and} \quad 2\leq t_0\leq e^{\frac{B_1}{B_2}}
\end{equation}
then derivative (\ref{der}) is negative for all $\lambda\geq
\lambda_0$ and $\rho\in [\rho_0,1)$. Note $2B_1 \cdot
e^{-\frac{B_1}{B_2}}< 0.0075$ (see Remark \ref{T0}).

If $\rho_0$ and $\lambda_1$ are such small numbers, that the first
inequality of (\ref{T01}) does not hold true, then consider Case
2. In this case, $ t_0 $ should be chosen large enough.

\textbf{Case 2.} Let $B_1/{\rho_0}\leq B_2 \ln t_0$ or, which is
the same, $t_0^{\rho_0}\geq e^{\frac{B_1}{B_2}}$. From (\ref{t01})
one has
\[
t_0^{\rho_0}\geq \frac{2B_2 \ln t_0}{\lambda}.
\]
Thus, in Case 2 in order for the derivative (\ref{der}) to be
negative for all $\lambda\geq \lambda_0$ and $\rho\in [\rho_0,1)$,
it is sufficient that the following inequality takes place
$t_0\geq T_0$, where (see Remark \ref{T0})
\begin{equation}\label{T02}
T_0= e^k, \quad k \geq \frac{1}{\rho_0} \max
\bigg\{\frac{B_1}{B_2}, \ln\frac{2B_2 k}{\lambda_0}\bigg\}.
\end{equation}

Finally, by virtue of inequality $e_\lambda(1)=e^{-\lambda t}>0$,
one has $e_\lambda(\rho)>0$.

\end{proof}
Since
$$
U(t,\rho)=|\hat{u}(\xi_0, t)|=E_\rho(-A(\xi_0) t^\rho)
|\hat{\varphi}(\xi_0)|=E_\rho(-\lambda_0 t^\rho)
|\hat{\varphi}(\xi_0)|,
$$
Lemma \ref{W} follows immediately from Lemma \ref{elambda}.
Theorem \ref{tin} is an easy consequence of these two lemmas.

In conclusion, we make the following remark. If the elliptic
polynomial $A(\xi)$ is nonhomogeneous, that is
$A(\xi)=\sum\limits_{|\alpha|\leq m} a_\alpha \xi^\alpha$ and
moreover, $A(\xi)\geq \lambda_0>0$, then from Lemma \ref{elambda}
it follows:

\emph{If $t_0\geq T_0$ and $T_0$ is as above, then $E_\rho(-A(\xi)
t^\rho)$, as a function of $\rho$, is positive and decreases
monotonically in $\rho\in [\rho_0, 1]$ for any} $\xi\in
\mathbb{R}^N$.

Therefore, in this case you can also consider various options for
the function $U(t, \rho)$. Examples $U(t, \rho)=||Au(x, t)||^2$,
$U(t, \rho)=||u(x, t)||^2$, $U(t, \rho)=(u,\varphi)$.

\section{Second inverse problem}

To prove Theorem \ref{tin2}, we first find the unknown parameter $
\rho $. Suppose, as required by Theorem \ref{tin2}, that $d_0$
satisfies condition (\ref{thm2}) with $\lambda_0=A(\xi_0)=1$.
Then, as it follows from Lemma \ref{elambda}, for all $t_0\geq
T_0(1, \rho_0)$ the equation
\[
V(\xi_0, t_0, \rho, \sigma)=|\hat{v}(\xi_0,
t_0)|=E_\rho(-t^\rho)|\varphi(\xi_0)|=d_0
\]
has the unique solution $\rho^\star\in (\rho_0, 1)$.

Now let us define $\sigma^\star\in [\sigma_0, 1)$, which
corresponds to the already found $\rho^\star$ and satisfies
condition (\ref{exsigma}). Let $\beta = \frac{3\pi}{4}\rho^\star$.
Then formula (\ref{Erho}) will have the form
\begin{equation}\label{Elambda}
E_{\rho^\star}(-\lambda^\sigma t_0^{\rho^\star})=
\frac{1}{\lambda^\sigma t_0^{\rho^\star} \Gamma
(1-\rho^\star)}-\frac{1}{2\pi i \rho^\star\lambda^\sigma
t_0^{\rho^\star}}\int\limits_{\delta(1;\beta)}\frac{e^{\zeta^{1/{\rho^\star}}}\zeta}{\zeta+\lambda^\sigma
t_0^{\rho^\star}} d\zeta = g_1(\sigma)+g_2(\sigma).
\end{equation}
One has
\[
g'_1(\sigma)=-\frac{\ln \lambda}{\lambda^\sigma
t_0^{\rho^\star}\Gamma (1-\rho^\star)}
\]
and
\[
g'_2(\sigma)=\frac{(1+t_0^{\rho^\star})\ln \lambda}{2\pi i
\rho^\star\lambda^\sigma
t_0^{\rho^\star}}\int\limits_{\delta(1;\beta)}\frac{e^{\zeta^{1/{\rho^\star}}}\zeta}{\zeta+\lambda^\sigma
t_0^{\rho^\star}} d\zeta.
\]
It is not hard to verify, that $g'_2(\lambda)$ has the estimate
(is proved in a completely similar way to estimate of $J_3$)
\[
|g'_2(\sigma)|\leq \frac{(1+t_0^{\rho^\star})\ln \lambda}{\pi
(\lambda^\sigma t_0^{\rho^\star})^2} \big(C_0+\frac{4}{3}\pi\big)<
\frac{2.3(1+t_0^{\rho^\star})\ln \lambda}{(\lambda^\sigma
t_0^{\rho^\star})^2}.
\]
Therefore, for all $t_0>1$ we have
$$
\frac{d}{d\sigma} e_{\lambda^\sigma}(\rho^\star)<-\frac{\ln
\lambda}{\lambda^\sigma t_0^{\rho^\star}\Gamma
(1-\rho^\star)}+\frac{5\ln \lambda}{\lambda^{2\sigma}
t_0^{\rho^\star}}.
$$
Hence this derivative is negative if
\[
\lambda^\sigma\geq \lambda^{\sigma_0}\geq 5\,\Gamma
(1-\rho^\star).
\]

Thus, if $\lambda_1\geq \Lambda_1$, where (see Remark \ref{T0})
\begin{equation}\label{L1}
\Lambda_1= e^n, \quad n \geq \frac{\ln (5\,
\Gamma(1-\rho^\star))}{\sigma_0},
\end{equation}
then $e_{\lambda_1^\sigma}(\rho^\star)$, as a function of
$\sigma\in[\sigma_0, 1)$, strictly decreases for all $t_0>1$.

Moreover, for all $\lambda_1\geq \Lambda_1$ and
$\sigma\in[\sigma_0, 1)$ the following estimate is fulfilled
\[
e_{\lambda_1}(\rho^\star)<
e_{\lambda_1^\sigma}(\rho^\star)<e_1(\rho^\star).
\]
The last estimate shows that if $ d_1 $ satisfies condition
(\ref{thm3}), then, assuming $ \rho^\star $ has already been
found, we can easily determine the parameter $ \sigma^\star $ from
equality (\ref{exsigma}), that is, from
\[
e_{\lambda_1^\sigma}(\rho^\star)|\hat{\varphi}(\xi_1)|=d_1.
\]

\section{Acknowledgement}
The authors convey thanks to Sh. A. Alimov for discussions of
these results.

\bibliographystyle{amsplain}

\end{document}